\documentclass{templt}

\newtheorem{theorem}{Theorem} 
\newtheorem{lemma}[theorem]{Lemma}
\newtheorem{remark}[theorem]{Remark}

\numberwithin{equation}{section}

\begin{document}

\title[Spectral representation for compact self-adjoint operators]{Spectral theorem for compact self-adjoint operators on smooth Banach spaces}


\author[Mohammed Shameem]{Mohammed Shameem}
\address{Department of Mathematics, National Institute of Technology Calicut, Kozhikode, Kerala, India 673601}
\curraddr{}
\email{shameemmaths@gmail.com}
\thanks{}

\author[Deepesh K. P.]{Deepesh K. P.}
\address{Department of Mathematics, National Institute of Technology Calicut, Kozhikode, Kerala, India 673601}
\curraddr{}
\email{deepeshkp@nitc.ac.in}
\thanks{}

\subjclass[2020]{Primary 47B01; Secondary 46B07, 47A65}

\date{}

\dedicatory{}


\begin{abstract}
We establish a spectral representation theorem for compact operators that are self-adjoint with respect to the duality mapping on smooth Banach spaces. The result is a natural analogue of the classical spectral theorem for compact self-adjoint operators on Hilbert spaces.
\end{abstract}

\maketitle
Self-adjoint operators on Hilbert spaces are one of the most important classes of operators used in Mathematics and Physics. The spectral representation theorem for compact self-adjoint operators is a fundamental result in the Hilbert space operator theory. It has numerous important consequences, including the singular value decomposition.

Motivated by the classical Hilbert space theory, several authors have investigated notions of self-adjointness in Banach spaces \cite{shtraus1978,wojcik2013}. In particular, García-Pacheco \cite{banachSA} introduced a notion of self-adjoint operators on smooth Banach spaces based on the duality mapping.
 Certain representation theorems for compact operators in Banach spaces have also been obtained under additional assumptions \cite{edmundcompact,ramesh2025}. The purpose of this paper is to establish a spectral representation theorem for compact self-adjoint operators on smooth Banach spaces that is analogous to the classical Hilbert space theorem.

Throughout the paper, $X$ denotes a complex Banach space, $X^*$ its dual space, and \(B(X)\) the algebra of all bounded linear operators on \(X\). We write \(S_X\) for the unit sphere of \(X\). By an operator we always mean an element of \(B(X)\). The eigenspectrum of an operator \(T\) is denoted by \(\sigma_{\mathrm{eig}}(T)\). The transpose of an operator \(T\in B(X)\) is the operator \(T^{\prime}\) on \(X^*\) defined by
\[
(T'y)(x)=y(Tx), \qquad x\in X,\, y\in X^*.
\]

A Banach space \(X\) is said to be smooth if, for every \(x\in S_X\), there exists a unique \(x^* \in S_{X^*}\) satisfying \(x^*(x)=1\) \cite{dragomir2003}. The associated duality mapping \(J:X \to X^*\) is defined by \(J(x)=x^*\). It is well known that \(J\) is surjective if and only if \(X\) is reflexive, and it is injective if and only if \(X\) is strictly convex \cite{dragomir2003}.

García-Pacheco \cite{banachSA} introduced the following classes of operators on smooth Banach spaces, which are natural analogues of the corresponding notions on Hilbert spaces. An operator \(T\in B(X)\) is called \emph{self-adjoint} if
\[
T' \circ J = J \circ T.
\]
Several fundamental properties of these classes of operators were established in \cite{banachSA}.

\section*{Spectral representation for compact  self-adjoint operators}
In Hilbert space spectral theory, the spectral theorem for compact self-adjoint operators is one of the classical results. It asserts that if \(T\) is a compact self-adjoint operator on a Hilbert space \(H\), then there exist distinct nonzero real eigenvalues \((\lambda_n)_{n=1}^N\), where \(N\in\mathbb{N}\cup\{\infty\}\), such that
\[
T=\displaystyle\sum_{n=1}^{N}\lambda_nP_n,
\]
where \(P_n\) denotes the orthogonal projection of \(H\) onto \(\ker(T-\lambda_nI)\), the kernel of \( T-\lambda_nI\). Moreover, if \(N=\infty\), then \(\lambda_n\to0\) as \(n\to\infty\).

In the Banach space setting, Edmunds \emph{et al.} \cite{edmundcompact,edmunds2010,edmunds2012} established representation theorems for compact operators between reflexive Banach spaces with strictly convex dual spaces. More recently, Ramesh \emph{et al.} \cite{ramesh2025} extended these results to compact operators between reflexive Banach spaces without assuming strict convexity of the dual space. Although these results provide representation formulae for compact operators, they do not yield a spectral representation analogous to the classical Hilbert space theorem. In the present paper, we show that if the underlying Banach space is smooth and the operator is self-adjoint with respect to the duality mapping in the sense of \cite{banachSA}, then such a spectral representation can indeed be obtained. Thus, we establish a natural Banach space analogue of the classical spectral theorem for compact self-adjoint operators on Hilbert spaces.

 In the Hilbert space setting, every self-adjoint operator has real eigenvalues, and eigenvectors corresponding to distinct eigenvalues are orthogonal. García-Pacheco \cite[Theorem~6.3(i)]{banachSA} proved that the eigenvalues of operators that are self-adjoint with respect to the duality mapping on smooth Banach spaces are also real. Here we establish a result similar to the orthogonality of eigenvectors corresponding to distinct eigenvalues of a self-adjoint operator. 
 \begin{lemma}\label{orthogonality-lemma}
     Let $u,v$ be eigenvectors corresponding to distinct eigenvalues of a self-adjoint operator $T$ on a smooth Banach space $X$. Then,  \(J(u)(v)=J(v)(u)=0\).
\begin{proof}
 Let $Tu=\lambda u$ and $Tv=\mu v$, where $\lambda\neq \mu$. Since $\lambda, \mu$ are real,
\[
\lambda J(u)(v)=J(Tu)(v)=J(u)(Tv)=\mu J(u)(v).
\]
As $\lambda\neq \mu$, it follows that $J(u)(v)=0$. Similarly, $J(v)(u)=0$. \end{proof}
 \end{lemma}

Let $V_1$ be a finite-dimensional subspace of a Banach space $X$. Then there exists a closed subspace $V_2$ of $X$ such that $X=V_1\oplus V_2$, where $\oplus$ represents the direct sum of the spaces \cite[Corollary 5.8]{MTNfunctional}. Hence, every $x\in X$ can be uniquely written as $x=v_1+v_2$, where $v_1\in V_1,\,v_2\in V_2$. Then the operator $P\in B(X)$ defined by $P(x)=v_1$ is called the  \textit{natural projection onto} $V_1$ in this article.


\begin{theorem}\label{spectral}
    Let $X$ be a smooth Banach space and let $T \in B(X)$ be a compact self-adjoint operator. Then there exists a finite or infinite sequence of distinct nonzero real eigenvalues $(\lambda_n)_{n=1}^N$, where  $N \in \mathbb{N}\cup \{\infty\}$, such that
    \[
\displaystyle T = \sum_{n=1}^N \lambda_nP_n,
\]
where $P_n$ is the natural projection onto $ker(T-\lambda_nI)$. Moreover, if $N = \infty$, then $\lambda_n \rightarrow 0 $ as  $n \rightarrow \infty$.
\end{theorem}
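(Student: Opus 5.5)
The plan is to imitate the Hilbert space proof, with the orthogonal complement of each eigenspace replaced by the range of $T-\lambda I$. By \cite[Theorem~6.3(i)]{banachSA} the nonzero eigenvalues are real. By compactness they form a finite set or a sequence $\lambda_n\to0$, and each $\ker(T-\lambda_nI)$ is finite-dimensional. Throughout I use the homogeneous extension $J(tx)=tJ(x)$ for real $t\ge 0$ (and $J(tx)=tJ(x)$ for real $t$ by symmetry), so that $J(\lambda u)=\lambda J(u)$ for real $\lambda$, exactly as in Lemma~\ref{orthogonality-lemma}.

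\emph{Step 1 (choice of complement: ascent one).} Fix $\lambda=\lambda_n\neq0$ and suppose $(T-\lambda I)^2x=0$. Put $u=(T-\lambda I)x\in\ker(T-\lambda I)$. Self-adjointness gives
\[
\|u\|^2=J(u)(u)=J(u)\bigl((T-\lambda I)x\bigr)=\bigl(J(Tu)-\lambda J(u)\bigr)(x)=0 .
\]
Hence the ascent of $T-\lambda I$ is $1$. Riesz theory for compact operators then gives $X=\ker(T-\lambda I)\oplus\operatorname{ran}(T-\lambda I)$, with the range closed. I take $V_2=\operatorname{ran}(T-\lambda_nI)$ as the complement defining the natural projection $P_n$. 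This choice is what makes the statement meaningful, since a natural projection depends on the complement chosen.

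\emph{Step 2 (algebra of the $P_n$).} Both summands are $T$-invariant, so $TP_n=P_nT=\lambda_nP_n$. If $Tv=\lambda_mv$ with $m\ne n$, then $v=(T-\lambda_nI)\bigl(v/(\lambda_m-\lambda_n)\bigr)\in\operatorname{ran}(T-\lambda_nI)$. Therefore $P_nP_m=0$ for $n\ne m$. If $N<\infty$, set $Q=I-\sum_{n\le N}P_n$; otherwise set $Q_k=I-\sum_{n\le k}P_n$. Each $Q_k$ is a projection commuting with $T$, with range $M_k=\bigcap_{n\le k}\operatorname{ran}(T-\lambda_nI)$. Moreover $T-\sum_{n\le k}\lambda_nP_n=TQ_k$, so the theorem reduces to showing that $TQ_k\to0$, or $TQ=0$ when $N<\infty$.

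\emph{Step 3 (the restricted operator).} The subspace $M_k$ is closed, $T$-invariant and smooth. By Hahn--Banach and uniqueness of support functionals, the duality map of $M_k$ is $J(x)|_{M_k}$. Hence $S_k=T|_{M_k}$ is a compact self-adjoint operator on $M_k$. Its nonzero eigenvalues are exactly the $\lambda_n$ with $n>k$: any $\lambda_n$-eigenvector with $n\le k$ lies in $\ker P_n$, which contains $M_k$, and is therefore zero. So $r(S_k)=\sup_{n>k}|\lambda_n|$, which tends to $0$, and in the finite case $r(S)=0$.

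\emph{Step 4 (main obstacle: norm equals spectral radius).} What is needed is $\|S_k\|=r(S_k)$ for self-adjoint operators on smooth spaces, or at least that a quasinilpotent self-adjoint operator vanishes. I would first try to show that self-adjoint operators are hermitian: $J(x)(Tx)=J(Tx)(x)$, and one checks that this is real via the defining relation and the lemma's argument. Then Sinclair's theorem (for a hermitian operator, norm equals spectral radius) finishes the proof. If that fails, I would argue directly on $M_\infty=\bigcap_kM_k$: there $T$ is compact with spectrum $\{0\}$. Self-adjointness together with the ascent argument of Step 1 applied at $\lambda=0$ on $\overline{\operatorname{ran}}$ would then force $T|_{M_\infty}=0$, and a compactness argument would upgrade this to $\|TQ_k\|\to0$.

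A remaining subtlety is that $\|Q_k\|$ need not be bounded. So I would first establish $\|S_k\|\to0$. Then $TQ_kx=S_k(Q_kx)$ gives convergence on vectors of the form $Q_kx$. To get convergence in general I would either show $\|TQ_k\|=\|S_kQ_k\|$ is controlled, or settle for strong convergence of the series if operator-norm convergence cannot be obtained.
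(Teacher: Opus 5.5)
Your Steps 1--3 are sound, and they give the paper's projections. The paper's $\ker P_n$ and your $\mathrm{ran}(T-\lambda_nI)$ both lie in $\bigcap_j\ker J(x^n_j)$, and both complement $\ker(T-\lambda_nI)$, so they coincide.

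\textbf{Step 4 is a genuine gap.} It carries all the analytic content, and it is only a plan whose main route fails. The identity $J(x)(Tx)=J(Tx)(x)$ says nothing about reality, because outside Hilbert space $J(y)(x)$ need not be the conjugate of $J(x)(y)$. The lemma works only because $u$ is an eigenvector. In fact self-adjoint operators need not be hermitian. On $\mathbb{C}^2$ take the smooth norm $\|(a,b)\|^4=(|a|^2+|b|^2)^2+\epsilon\,\mathrm{Im}(a^2\bar b^2)$ with $\epsilon>0$ small. The isometric involution $T(a,b)=(a,-b)$ is self-adjoint, since $J(Tx)=J(x)\circ T^{-1}=J(x)\circ T$. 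Yet $J(x)(Tx)=-i\epsilon/2$ at $x=(1,1)$, so Sinclair's theorem is unavailable. Your fallback via ascent at $0$ cannot work either: ascent one is compatible with injective compact quasinilpotent operators such as the Volterra operator.

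The paper supplies the missing fact by citing \cite[Theorem~3.1]{banachSA}: a compact self-adjoint operator on a smooth space has a real eigenvalue of modulus equal to its norm. Applied to each restriction $T_{n+1}=T|_{N_n}$, this is exactly your $\|S_k\|=r(S_k)$. It also follows directly:
\[
\|Tx\|^2=J(Tx)(Tx)=J(x)(T^2x)\le\|x\|\,\|T^2x\|,
\]
so $\|T\|^2=\|T^2\|$. Since $T^2$ is again self-adjoint, $\|T^{2^m}\|=\|T\|^{2^m}$, hence $\|T\|=r(T)$. With your Step 3 this gives $TQ=0$ when $N<\infty$, and $\|S_k\|=\sup_{n>k}|\lambda_n|$.

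\textbf{The bound on $\|Q_k\|$ is also left open.} Falling back to strong convergence does not avoid it, since $\|TQ_kx\|\le\|S_k\|\,\|Q_kx\|$ still needs control of $\|Q_kx\|$. You are right that an argument is needed. The paper's final estimate $\|T_nz\|\le\|T_n\|\,\|z\|\le|\lambda_n|\,\|x\|$ uses $\|z\|\le\|x\|$ without comment. One way to close it:
\begin{itemize}
\item Enumerate the eigenvalues by nonincreasing modulus, so that $\|S_k\|=|\lambda_{k+1}|$.
\item For $z\in\mathrm{ran}\,Q_k$ and $m\in\ker(T-\lambda_jI)$ with $j\le k$, self-adjointness gives $J(S_k^nz)(m)=\lambda_j^nJ(z)(m)$. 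Hence $|\lambda_j|^n|J(z)(m)|\le|\lambda_{k+1}|^n\|z\|\,\|m\|$.
\item If $|\lambda_{k+1}|<|\lambda_k|$, letting $n\to\infty$ shows that $J(Q_kx)$ annihilates $(I-Q_k)x$. So $\|Q_kx\|^2=J(Q_kx)(x)$, which gives $\|Q_k\|\le1$ and $\|TQ_k\|\le|\lambda_{k+1}|$.
\item Otherwise $\lambda_{k+1}=-\lambda_k$, and then $|\lambda_{k+2}|<|\lambda_{k+1}|$. Write $TQ_k=TQ_{k+1}+\lambda_{k+1}P_{k+1}$. Here $\|P_{k+1}\|\le1$, because your Step 1 computation shows $J(P_{k+1}x)$ annihilates $\mathrm{ran}(T-\lambda_{k+1}I)$.
\end{itemize}
In either case $\|TQ_k\|\le|\lambda_{k+1}|+|\lambda_{k+2}|\to0$, which gives convergence in operator norm.
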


    \begin{proof}
    If $T=0$, then the theorem trivially true. So we assume $T\neq 0$. For a compact self-adjoint operator $T \in B(X)$, by Theorem 3.1 of \cite{banachSA}, there exists real eigenvalue $\lambda_1$ for $T$ such that $|\lambda_1|=\|T\|$. Define
    \[
    M_1:=\ker(T-\lambda_1I).
    \]
    Since the eigenspace $M_1$ of $T$ is finite-dimensional, say, of dimension $n_1$, by Auerbach's Lemma (\cite[B.4.8]{opideal}) there exists a basis $\{x^1_1,x^1_2,\ldots,x^1_{n_1}\}$ for $M_1$ and a dual basis   $\{f_1^1,f_2^1,\ldots,f_{n_1}^1\}$ for $M_1^*$ such that $\|x^1_j\|=1=\|f_j^1\|$ and $f^1_j(x^1_i)=\delta^i_j$. Without loss of generality, we can take all $f_j^1$ to be the Hahn-Banach extensions (unique if $X$ is reflexive) of the dual basis elements to the whole space, which retain all these properties. Since $X$ is assumed to be smooth, we can also see that $J(x^1_j)=f^1_j$ for each $j\in \{1,2,\ldots,n_1\}$. Now, define
$$\displaystyle N_1 = \displaystyle\bigcap_{j=1}^{n_1} ker({f_j^1}).$$
Then $X=M_1\oplus N_1$ (\cite[Corollary 5.8]{MTNfunctional}) and $T(M_1)\subseteq M_1$. Now, for $v \in N_1$, 
    $$
    f^1_j(Tv)=J(x_j^1)(Tv) = J(Tx^1_j)(v) = \lambda J(x^1_j)(v) =0
    $$
    for all $j=1,2,..,n_1$, and hence, $T(N_1)\subseteq N_1$.\vskip0.2cm

    Now consider the truncations $T_2:=T_{N_1}: N_1\to N_1$. If $T_2=0$, then for each $x\in X$, considering $x=y+z\in M_1\oplus N_1$, we get
    \[Tx=T(y)+T_2(z)=T(y)=\lambda_1 y=\lambda_1P_1 x,\]
where $P_1:X\to X$ is the projection onto $M_1$. Thus $T=\lambda_1P_1$ in this case.\vskip0.2cm
Suppose $T_2\neq 0$. Then $T_2$ is clearly a compact operator. To see that it is self-adjoint, consider the duality mapping $J_{N_1}:N_1\to {N_1}^*$. Note that for $x\in N_1$, $J_{N_1}(x)=J(x)_{|_{N_1}}$.
%
%
%
%
Since $T$ is self-adjoint, for $x\in N_1$,
\[  J_{N_1}(T_2x) = J(Tx)|_{N_1} = J(x)T|_{N_1}= J_{N_1}(x)T_2= T_2'J_{N_1}(x).\]

%
%
%
%
%
Considering $N_1$ in place of $X$, the self-adjoint compact operator $T_2:N_1\to N_1$ has a real eigenvalue $\lambda_2$ such that $|\lambda_2|=\|T_2\|\leq |\lambda_1|$, as before. Clearly $\lambda_2\neq \lambda_1$ and using Lemma \ref{orthogonality-lemma}, we get $\ker(T_2-\lambda_2I)=\ker(T-\lambda_2I).$
%
%
%
Call
\[M_2:=\ker(T-\lambda_2I)\]
Let $\{x^2_1,x^2_2,\ldots, x^2_{n_2}\}$ be a basis for $M_2$ and $\{f^2_1,f^2_2,\ldots, f^2_{n_2}\}$ be its dual basis (extended to $X^*$) obtained by using the Auerbach's Lemma. Then, as in the previous case, $J(x^2_i)=f^2_i$ for each $i\in \{1,2,\ldots,n_2\}$. Now define
$$\displaystyle N_2 = \displaystyle\bigcap_{i=1,\, j=1}^{i=2,\, j=n_i} ker({f_j^i}).$$
Then $X=M_1\oplus M_2\oplus N_2$ and $T(M_i)\subset M_i$ for $i=1,2$.
%
%
%
Since for $i\in \{1,2\}$, $j\in \{1,2,\ldots, n_i\}$, and for any $x\in N_2$, $f_j^i(x)=0$, we have
\[J(x^i_j)(Tx)=JT(x^i_j)(x)=\lambda_iJ(x^i_j)(x)=0,\]
proving $T(N_2)\subset N_2$.
%
%
Hence we can construct $T_3:=T_{|_{N_2}}:N_2\to N_2$ and so on.\vskip0.2cm

Proceeding like this, we get compact self-adjoint operators $T_1,T_2,\ldots$, eigenvalues $\lambda_1, \lambda_2,\ldots$ and closed subspaces $M_1,M_2,\ldots$, $N_1,N_2,\ldots$ such that for each $n\in \mathbb{N}$ and $i\in \{1,2,\ldots, n\}$,
\[X=M_1\oplus M_2 \oplus \ldots \oplus M_n \oplus N_n, \quad T(M_i)\subset M_i, \quad T(N_n) \subset N_n.\]
If $T_{N+1}=0$, for some $N \in \mathbb{N}$, then $N_{N+1}=Ker(T)$, $X=M_1\oplus M_2 \oplus \ldots \oplus M_N \oplus Ker(T)$, and for each $ x= x_1+ x_2+ \ldots + x_N+h\in M_1\oplus M_2 \oplus \ldots \oplus M_N+Ker(T),$
\begin{align*}
    Tx&=T(x_1+x_2+\ldots+x_N+h), \quad \\
    &= \lambda_1 x_1+\lambda_2 x_2+\ldots+\lambda_N x_N\\
    &=\lambda_1 P_1 x+\lambda_2 P_2 x+\ldots+\lambda_N P_N x\\
    &=\displaystyle\sum_{j=1}^N \lambda_j P_j x.
\end{align*}
If $T_{n+1}\neq 0$ for each $n\in \mathbb{N}$, then we get a sequence of eigenvalues $(\lambda_n)$ of $T$ such that $|\lambda_1|\geq |\lambda_2|\geq \ldots$, bounded below by $0$. 
Suppose  $\{|\lambda_n|\}_{n=1}^\infty$ converges to an $\alpha \neq 0$. Then $B(0,\alpha)^c$ contains infinitely many distinct eigenvalues of $T$, contradicting the fact that for eigenvalues of a compact operator, the only possible eigenvalue is $0$ (\cite[Theorem 10.1.2]{MTNfunctional}). Hence if $N=\infty$, then $\lambda_n$ converges to zero. Then $T=\displaystyle\sum_{j=1}^{\infty} \lambda_j P_j$, where the convergence is in the operator norm. To see this, take $x\in X$ and let $\epsilon>0$ be any. Choose $n\in \mathbb{N}$ such that $|\lambda_n|<\epsilon$. Writing $X=M_1\oplus M_2 \oplus \ldots \oplus M_n \oplus N_n$, for each $x=y+z$, where $y\in M_1\oplus M_2 \oplus \ldots \oplus M_n$ and $z\in N_n$,
\[\left(T-\displaystyle\sum_{j=1}^n \lambda_j P_j\right) x=\left(T-\displaystyle\sum_{j=1}^n \lambda_j P_j\right) y+\left(T-\displaystyle\sum_{j=1}^n \lambda_j P_j\right) z=Tz.\]
Thus, for each $x\in X$ with $\|x\|\leq 1$, we have
\[\left\|\left(T-\displaystyle\sum_{j=1}^n \lambda_j P_j\right) x\right\|=\|Tz\|=\|T_n z\|\leq \|T_n\| \|z\|\leq |\lambda_n|\|x\|.\]
Since $|\lambda_n|\to 0$ as $n\to \infty$, we obtain $T=\displaystyle\sum_{n=1}^{\infty} \lambda_n P_n$.
    \end{proof}
 In the finite-dimensional setting, the construction simplifies considerably and becomes more closely analogous to the Hilbert space spectral theorem, since the natural projections admit a simpler representation. We retain the notations used in the preceding theorem in the following result.



\begin{theorem}
    Let $X$ be a finite dimensional smooth Banach space and $T \in B(X)$ be a self-adjoint operator. Then there exist a collection of distinct nonzero eigenvalues $\lambda_1,\lambda_2,\ldots,\lambda_m$, $m\leq dim(X)$, of $T$ and a linearly independent collection of corresponding unit eigenvectors $\{x^1_1,..,x^1_{n_1},x^2_1,\ldots, x^{m+1}_{n
    _{m+1}}\}$ in $X$ such that
    $$x=\displaystyle\sum J(x_i^j)(x)x_i^j$$
    for each $x\in X$, with $1\leq j \leq m+1$, $1\leq i \leq n_{m+1}$ and
    $$
        Tx = \displaystyle\sum \lambda_j J(x_i^j)(x)x_i^j,
    $$
    where $\lambda_j$'s are distinct non-zero eigenvalues of $T$. Moreover, if $M_k=ker(T-\lambda_k I)$ for $k\in \{1,2,\ldots,m\}$, we have $X=M_1\oplus M_2\oplus \ldots \oplus M_m\oplus N(T)$.
\end{theorem}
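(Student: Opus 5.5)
The plan is to specialize Theorem~\ref{spectral} to the finite-dimensional case. Every operator on a finite-dimensional space is compact, so the construction in that proof applies verbatim to $T$. It produces distinct nonzero real eigenvalues $\lambda_1,\lambda_2,\ldots$, eigenspaces $M_k=\ker(T-\lambda_kI)$, and subspaces $N_k$ with $X=M_1\oplus\cdots\oplus M_k\oplus N_k$.

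\emph{Step 1: the construction terminates with $N_m=\ker T$.} Each $M_k$ is nonzero, so $\dim N_k$ strictly decreases with $k$. Hence there is a first index $m$ with $T_{m+1}=T|_{N_m}=0$, and $m\le\dim X$ because the $M_k$ are independent and nonzero. From $T_{m+1}=0$ we get $N_m\subseteq\ker T$. For the reverse inclusion, take $h\in\ker T$. It is an eigenvector for the eigenvalue $0$, which differs from every $\lambda_k$. By Lemma~\ref{orthogonality-lemma}, $f^k_i(h)=J(x^k_i)(h)=0$ for all $k,i$, so $h\in N_m$. Thus $X=M_1\oplus\cdots\oplus M_m\oplus\ker T$, which is the last assertion.

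\emph{Step 2: a $J$-biorthogonal basis.} Set $M_{m+1}:=\ker T$ and $\lambda_{m+1}:=0$. In each $M_k$, $1\le k\le m+1$, take an Auerbach basis $\{x^k_1,\ldots,x^k_{n_k}\}$ with norm-one dual functionals, extended by Hahn--Banach to $X$. As in the proof of Theorem~\ref{spectral}, smoothness forces these extensions to equal $J(x^k_i)$, since each is a norm-one functional attaining the value $1$ at the unit vector $x^k_i$.
\begin{itemize}
\item Within one block, $J(x^k_i)(x^k_l)=\delta_{il}$ by the Auerbach property.
\item Across blocks $k\neq l$, $J(x^k_i)(x^l_j)=0$ by Lemma~\ref{orthogonality-lemma}, because the eigenvalues $\lambda_k\neq\lambda_l$ are distinct; this includes the kernel block, with eigenvalue $0$.
\end{itemize}
So the whole family satisfies $J(x^k_i)(x^l_j)=\delta_{kl}\delta_{ij}$.

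\emph{Step 3: the expansions.} A biorthogonal family is linearly independent: apply $J(x^k_i)$ to any vanishing linear combination. By Step 1 the family spans $X$, so it is a basis of unit eigenvectors. Writing $x=\sum c^k_i x^k_i$ and applying $J(x^k_i)$ gives $c^k_i=J(x^k_i)(x)$, hence $x=\sum J(x^j_i)(x)\,x^j_i$. Applying $T$ and using $Tx^j_i=\lambda_j x^j_i$, with $\lambda_{m+1}=0$, gives $Tx=\sum\lambda_j J(x^j_i)(x)\,x^j_i$.

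The main obstacle is Step 1, specifically identifying $N_m$ with all of $\ker T$ rather than just a subspace of it. This is handled by applying Lemma~\ref{orthogonality-lemma} to the eigenvalue $0$ against each nonzero $\lambda_k$. A related point needing care is that the kernel block must also be $J$-orthogonal to the other blocks, and the same lemma supplies this.
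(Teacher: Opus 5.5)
Your proof is correct and follows essentially the paper's route: specialize the construction of Theorem~\ref{spectral}, identify $N_m$ with $\ker T$, take Auerbach bases whose norm-one dual functionals coincide with $J(x^j_i)$ by smoothness, and use Lemma~\ref{orthogonality-lemma} (including against the eigenvalue $0$) to get the $J$-biorthogonality that yields both expansions. The only minor difference is in showing $\ker T\subseteq N_m$: the paper writes $x=x_1+\cdots+x_m+h$ and uses independence of the summands $\lambda_jx_j$ with $\lambda_j\neq 0$, whereas you apply the orthogonality lemma directly; both arguments work, and you also make explicit the termination of the construction and the linear independence of the basis via biorthogonality, which the paper leaves implicit.
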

\begin{proof}
    Let $X = M_1 \oplus M_2 \oplus .... \oplus M_m \oplus N_m$ be as in the proof of Theorem \ref{spectral}, where $M_k$ stands for the eigenspaces corresponding to nonzero eigenvalues of $T$, for $1\leq k\leq m$. 
    We first observe that $N_m=ker(T)$. Since $T_{|_{N_m}}=0$ in this case, we get $N_m\subset ker(T)$. Now, suppose $x\in ker(T)$. Writing $x=x_1+x_2+\ldots+x_m+h$, where $h\in N_m$, $Tx=0$ implies $\lambda_1 x_1+\lambda_2 x_2+\ldots+\lambda_m x_m=0$ as $h\in N_m$. This forces $x_j=0$ for all $1\leq j \leq m$ as they are independent vectors, giving $x=h\in N_m$.\vskip0.2cm
    
    For $x \in X$, let $P_jx = x_j$, where $x=x_1+...+x_m+h$. For each $j\in \{1,2,\ldots,m\}$, from the proof of Theorem \ref{spectral}, we have 
    $$x_j = \displaystyle\sum_{i=1}^{n_j} J(x_i^j)(x_j)x_i^j.$$
     If $N(T)\neq \{0\}$, then we can also choose an Auerbach basis for $N_m$, since it is the eigenspace of the eigenvalue $0$ of $T$, say $\{x^{m+1}_1,x^{m+1}_2,\ldots,x^{m+1}_{n_{m+1}} \}$, and using $J(y)(x_j)=0$ for any $y \in M_k$, $k\neq j$, we write
$$x =\displaystyle\sum_{\substack{j\in \{1,2,\ldots,m+1\}\\ i\in\{1,2,\ldots,n_j\}}} J(x_i^j)(x_j)x_i^j.$$
%

\noindent Now, applying Theorem \ref{spectral}, we get
    $$Tx = \sum_{j=1}^k \lambda_jP_jx =  \sum_{{\substack{j\in \{1,2,\ldots,m\}\\ i\in\{1,2,\ldots,n_j\}}}} \lambda_j J(x_i^j)(x)(x_i^j),$$
    which proves the claim.
\end{proof}
\begin{remark}
As a consequence of Theorem \ref{spectral}, if $X$ is finite-dimensional smooth Banach space and $T\in B(X)$ is a compact self-adjoint operator with non-negative eigenvalues, then there exists a compact operator $S\in B(X)$ such that $T=S^2$; in fact, if $T=\displaystyle\sum_{n=1}^{N}\lambda_nP_n$, where $N$ is finite, then $S=\displaystyle\sum_{n=1}^{N}\sqrt{\lambda_n}\,P_n$.
\end{remark}
 \section*{Acknowledgments}
 The authors would like to thank Dr. C. R. Jayanarayanan, Associate Professor, Indian Institute of Technology, Palakkad, for his valuable suggestions and insightful comments. Mr. Mohammed Shameem would like to thank the University Grants Commission of India for funding his research work (Fellowship No. 231610086159).

\section*{Statements and Declarations}
 The corresponding author declares that there is no conflict of interest on behalf of all authors. During this study, no datasets were created or examined.


\bibliographystyle{amsplain}


\end{document}